\newtheorem{Theorem}{Theorem}
\newtheorem{lemma}{Lemma}
\newtheorem{definition}{Definition}
\theoremstyle{remark}
\theoremstyle{remark}
\newcommand{\beq}{\begin{equation}}
\newcommand{\beql}[1]{\begin{equation}\label{#1}}
\newcommand{\eeq}{\end{equation}}
\theoremstyle{remark}
\newtheorem{Example}{Example}
\newcommand{\R}{{\mathbb{R}}}
\newcommand{\Z}{{\mathbb{Z}}}
\DeclareMathOperator{\conv}{conv}
\DeclareMathOperator{\diam}{diam}
\DeclareMathOperator{\sing}{Sing}
\begin{document}

\title{Extension of the first mixed volume to nonconvex sets}

\author{Emmanuel Tsukerman \thanks{Supported by the National Science Foundation Graduate Research Fellowship under Grant No.  DGE 1106400.  Any opinion, findings, and conclusions or recommendations expressed
in this material are those of the authors(s) and do not necessarily reflect the views of the National Science
Foundation.} \\
\small University of California, Berkeley }

\maketitle

\begin{abstract}
We study the first mixed volume for nonconvex sets and apply the results to limits of discrete isoperimetric problems. Let $ M,N \subset \R^d$. Define $D_N (M)=\lim_{\epsilon \downarrow 0} \frac{|M+\epsilon N|-|M|}{\epsilon}$ whenever the limit exists. Our main result states that for a compact domain $M \subset \R^d$ with piecewise $C^1$ boundary and bounded $N \subset \R^d$, $D_N(M)=D_{\conv(N)}(M)$ and $D_N(M)=\int_{\text{bd }M} h_N(u_M(x)) \, d \mathcal{H}^{d-1}(x)$.
\end{abstract}

\section{Background}

 Minkowski's theorem on mixed volumes (see e.g.,  Chapter 5 of Schneider's text \cite{MR3155183}), states that the volume of a Minkowski sum of convex bodies can be written as a polynomial in the coefficients of that Minkowski sum, where the coefficients of the polynomial depend only on the convex bodies.  Specifically, let $\mathcal{K}^d$ denote the set of convex bodies in $\R^d$ -- that is, nonempty, compact, convex subsets of $\R^d$. We denote by $|S|$ the $d$-dimensional volume of $S \subset \R^d$. Then 

\begin{Theorem}\label{MixedVolumeThm}
Suppose $K_1, K_2, \dots, K_m \in \mathcal{K}^d$. For $\lambda_i \geq 0$ for $i=1,\ldots,m$, 
\begin{equation}
|\lambda_1K_1+\lambda_2 K_2+ \cdots + \lambda_m K_m| = \sum \lambda_{i_1}\lambda_{i_2} \cdots \lambda_{i_d} V\left(K_{i_1}, K_{i_2}, \dots, K_{i_d}\right)
\end{equation}
where the sum on the left hand side is the Minkowski sum, and the sum on the right hand side is over all multisets of size $d$ whose elements are in the set $\{1, 2, \dots, m\}$.  The  functions $V$ are nonnegative, symmetric, and depend only on the convex bodies $K_{i_1}, K_{i_2}, \dots, K_{i_d}$.  For a fixed $d$-dimensional convex body $K$, $V(\underbrace{K, K, \dots, K}_{d \text { times}}) = |K|$.
\end{Theorem}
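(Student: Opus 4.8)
The plan is to reduce the entire statement to a single analytic fact: for convex bodies $K_1,\dots,K_m\in\mathcal{K}^d$, the function
\[
f(\lambda_1,\dots,\lambda_m) = |\lambda_1 K_1 + \cdots + \lambda_m K_m|
\]
is the restriction to the orthant $\{\lambda_i\ge 0\}$ of a homogeneous polynomial of degree $d$ in $\lambda_1,\dots,\lambda_m$. Once this is established, I would simply \emph{define} $V(K_{i_1},\dots,K_{i_d})$ to be the coefficient of the monomial $\lambda_{i_1}\cdots\lambda_{i_d}$ in $f$, so that the expansion takes exactly the stated form summed over multisets. The remaining claims are then formal: the coefficient depends only on the multiset $\{i_1,\dots,i_d\}$, which is precisely symmetry; it is determined by $f$, which is determined by the bodies, giving well-definedness; and setting $m=1$, $K_1=K$ gives $|\lambda_1 K|=\lambda_1^d|K|$, forcing $V(K,\dots,K)=|K|$. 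I would prove polynomiality in two stages, first for polytopes by induction on $d$ and then for general bodies by approximation.

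For polytopes $P_1,\dots,P_m$, write $P(\lambda)=\sum_i\lambda_i P_i$ and restrict to $\lambda_i>0$. The key structural input is that the normal fan of $P(\lambda)$ is the common refinement of the normal fans of the $P_i$, so there is a \emph{fixed} finite set $U\subset S^{d-1}$ of facet normals, independent of $\lambda$ on the open orthant, and the facet of $P(\lambda)$ with outer normal $u$ is the Minkowski sum $\sum_i\lambda_i F(P_i,u)$ of the corresponding faces. Two facts then drive the induction. First, the support function is Minkowski-linear, $h_{P(\lambda)}(u)=\sum_i\lambda_i h_{P_i}(u)$, hence linear in $\lambda$. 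Second, the cone (pyramid) decomposition over facets gives
\[
|P(\lambda)| = \frac{1}{d}\sum_{u\in U} h_{P(\lambda)}(u)\,\vol_{d-1}\bigl(F(P(\lambda),u)\bigr),
\]
with $h$ read as the signed distance to the supporting hyperplane. By the inductive hypothesis each $(d-1)$-dimensional facet volume is a homogeneous polynomial of degree $d-1$ in $\lambda$; multiplying by the linear factor and summing over the finite set $U$ yields a homogeneous polynomial of degree $d$. The base case $d=1$ is the length identity $|\sum_i\lambda_i[a_i,b_i]|=\sum_i\lambda_i(b_i-a_i)$.

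To pass to arbitrary bodies I would approximate each $K_i$ by polytopes converging in the Hausdorff metric, and use the continuity of both Minkowski addition and of the volume functional with respect to that metric, together with the fact that a continuous function agreeing with homogeneous degree-$d$ polynomials on a dense set is itself such a polynomial; in particular the monomial coefficients converge. For nonnegativity I would argue within the polytope stage: translating $K_i\mapsto K_i+t_i$ replaces $\sum_i\lambda_i K_i$ by a translate and so leaves $f$ unchanged, hence we may assume $0\in K_i$ for all $i$, making every $h_{P_i}(u)\ge 0$; then $h_{P(\lambda)}(u)$ has nonnegative coefficients and, by induction, so does each facet volume, so their products and sum do as well. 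Nonnegativity of the coefficients then survives the Hausdorff limit. The main obstacle I anticipate is making this limiting step fully rigorous: one must ensure that the polynomial \emph{degree} cannot drop in the limit and that the coefficient convergence is uniform enough to preserve the exact form, and one must verify the cone decomposition and the identity ``facet $=$ Minkowski sum of faces'' with care about the signed-distance bookkeeping and the degenerate directions where some $\lambda_i=0$. These two verifications — the polytope structural lemma and the continuity/degree control in the approximation — are where I expect essentially all of the work to lie.
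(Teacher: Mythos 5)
The paper does not prove this statement at all: it is quoted as background (Theorem~\ref{MixedVolumeThm}) with a citation to Chapter~5 of Schneider's book, so there is no in-paper proof to compare against. Your proposal is, in outline, precisely the classical proof found in that cited source --- polytopes first, by induction on dimension via the pyramid decomposition $|P|=\frac{1}{d}\sum_u h_P(u)\vol_{d-1}(F(P,u))$ together with the fact that $F(\sum_i\lambda_i P_i,u)=\sum_i\lambda_i F(P_i,u)$ with a fixed normal set on the open orthant, then Hausdorff approximation, with $V$ defined as the monomial coefficients --- and it is sound; the only loose point is your closing ``dense set'' phrasing, which is cleaner stated as: homogeneous degree-$d$ polynomials in $m$ variables form a finite-dimensional space closed under pointwise convergence (coefficients are linear in the values at finitely many interpolation points), so the uniform-on-compacts limit of the polytope volume polynomials is again such a polynomial and the coefficients, hence nonnegativity, pass to the limit.
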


We will be interested in generalizing the domain of the first mixed volume, $V(M,\ldots,M,N)$. 
In the literature, the special mixed volume 	
\beq
(M,K) \rightarrow V_1(M,K)=V(M,\ldots,M,K),
\eeq
 is known to have extensions to certain nonconvex sets $M$, important for several applications \cite{MR3155183}. For $M,K \in \mathcal{K}^d$,
\beql{lim}
V_1(M,K)=\frac{1}{d} \lim_{\epsilon \downarrow 0} \frac{|M+\epsilon K|-|M|}{\epsilon}
\eeq
and, taking $h_K$ to be the support function of $K$,
\beq
V_1(M,K)=\frac{1}{d}\int_{\mathbb{S}^{d-1}} h_K(u) S_{d-1}(M,du).
\eeq
The latter formula can be transformed into
\beql{bla}
V_1(M,K)=\frac{1}{d}\int_{\text{bd }M} h_K(u_M(x)) \, d \mathcal{H}^{d-1}(x),
\eeq
where $u_M(x)$ is the outer normal vector of $M$ at $x \in \text{bd }M$ and $\mathcal{H}^k$ is $k$-dimensional Hausdorff measure. Equation \eqref{bla} can be used to define $V_1(M,K)$ if $M$ is a compact set with a boundary which is a piecewise $C^1$ hypersurface (but $K$ remains a convex body). In this case, the limit relation \eqref{lim} remains valid \cite{MR1776095}. In this work, we further generalize to allow $K$ to be nonconvex and discuss the implications of this generalization.

To avoid confusion upon which definition of $V_1$ is being used, as not all definitions are equivalent when extended beyond convex bodies, we will introduce a different notation for $V_1$. This will emphasize both that we are no longer necessarily dealing with convex sets and the derivative-like nature of $V_1$. 

\begin{definition}
Let $ M,N \subset \R^d$. Define
\begin{equation}\label{deriv}
D_N (M)=\lim_{\epsilon \downarrow 0} \frac{|M+\epsilon N|-|M|}{\epsilon}
\end{equation}
whenever the limit exists.
\end{definition}

\begin{lemma}\label{LinearityLemma}
Suppose $K_1, K_2, K_3 \in \mathcal{K}^d$ and suppose $\alpha, \beta \in \R$.  Then
\begin{equation}
D_{\alpha K_2+\beta K_3}(K_1) = \alpha D_{K_2}(K_1)+\beta D_{K_3}(K_1).
\end{equation}
\end{lemma}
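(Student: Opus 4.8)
The plan is to reduce the statement to the classical linearity of the first mixed volume in its convex argument, for which the integral representation \eqref{bla} gives a transparent proof. By the limit relation \eqref{lim}, for convex bodies $D_K(K_1) = d\,V_1(K_1,K)$, so the asserted identity is equivalent to
\[
V_1(K_1,\, \alpha K_2 + \beta K_3) = \alpha\, V_1(K_1,K_2) + \beta\, V_1(K_1,K_3).
\]
Substituting \eqref{bla}, this reduces to the pointwise identity $h_{\alpha K_2 + \beta K_3} = \alpha\, h_{K_2} + \beta\, h_{K_3}$ on $\mathbb{S}^{d-1}$, after which linearity of the surface integral $\int_{\text{bd }K_1} h(u_{K_1}(x))\, d\mathcal{H}^{d-1}(x)$ in its integrand $h$ closes the argument.

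First I would record the two structural facts about support functions: additivity under Minkowski addition, $h_{A+B} = h_A + h_B$, and positive homogeneity, $h_{\lambda A} = \lambda\, h_A$ for $\lambda \geq 0$. For $\alpha,\beta \geq 0$ these immediately give $h_{\alpha K_2 + \beta K_3} = \alpha\, h_{K_2} + \beta\, h_{K_3}$, and since $\alpha K_2 + \beta K_3 \in \mathcal{K}^d$, representation \eqref{bla} applies to each of the three bodies; splitting the integral then yields the claim whenever the coefficients are nonnegative. Equivalently, one may run this argument through the surface-area-measure form $V_1(K_1,K) = \frac1d \int_{\mathbb{S}^{d-1}} h_K(u)\, S_{d-1}(K_1,du)$, which is manifestly linear in $h_K$.

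The main obstacle is the extension to negative coefficients, where positive homogeneity fails: for $\alpha < 0$ one has $h_{\alpha K_2}(u) = |\alpha|\, h_{K_2}(-u)$ rather than $\alpha\, h_{K_2}(u)$, and indeed a literal set-theoretic reading already breaks in dimension one, since reflecting a body does not negate its first mixed volume. I would therefore interpret $\alpha K_2 + \beta K_3$ for general real coefficients through the multilinear extension of the mixed volume: the functional $L \mapsto V_1(K_1,L)$ depends on $L$ only via $h_L$ and is $\R$-linear in $h_L$, so it extends uniquely to the real vector space spanned by support functions (formal Minkowski differences), on which the combination is taken. The one point requiring care is checking that this extension is well defined and consistent with \eqref{bla} on genuine convex bodies; the remainder is just the linearity of integration.
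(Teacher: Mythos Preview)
Your approach via the support-function integral \eqref{bla} is correct for $\alpha,\beta\ge 0$ and is a genuinely different route from the paper's. The paper's proof (present in the \LaTeX\ source but wrapped in a \texttt{comment} environment, so the lemma appears without proof in the compiled document) appeals directly to Minkowski's polynomial expansion, Theorem~\ref{MixedVolumeThm}: writing $|K_1+\epsilon\alpha K_2+\epsilon\beta K_3|$ as a polynomial in the coefficients $1,\epsilon\alpha,\epsilon\beta$ and extracting the terms of first order in $\epsilon$ yields $\alpha\, V(K_1,\ldots,K_1,K_2)+\beta\, V(K_1,\ldots,K_1,K_3)$ in one stroke. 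That route is shorter because it never passes through the surface-area measure or the support-function identities; your route is more explicit and, once \eqref{bla} is granted, entirely elementary.

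You are also right to flag the issue with negative coefficients, which neither the paper's statement nor its commented proof addresses: Theorem~\ref{MixedVolumeThm} is stated only for $\lambda_i\ge 0$, and your one-dimensional observation shows the identity is literally false under the set-theoretic reading of $\alpha K$ for $\alpha<0$. Your proposed repair via the linear span of support functions (formal Minkowski differences) is the standard one, though strictly speaking it reinterprets the left-hand side rather than proving the lemma as written.
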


\section{Motivation from Discrete Isoperimetric Inequalities}

Our study of $D_\cdot(\cdot)$ was motivated by discrete isoperimetric inequalities. As mentioned in the previous section, the classical perimeter of a set can be found via Minkowski addition; when $M$ is convex and $u = B^d$ is the unit $d$-dimensional ball, $D_u(M)$ gives the perimeter of the set $M$. In discrete isoperimetric problems, one is interested in solving isoperimetric problems on graphs. The following definitions and results appear in \cite{TVdiscreteIso}. Let $G=(V,E)$ be a graph and let $\#|S|$ be the cardinality of a set $S$.

\begin{definition}
The \emph{vertex boundary} $\partial_V(S)$  of a set $S \subset V$ is the set of vertices in $V \backslash S$ which are adjacent to some vertex in $S$:
\begin{equation}
\partial_V(S) = \{v \in V\backslash S: (v,u) \in E \text{ for some } u \in S\}
\end{equation}
The \emph{edge boundary} $\partial_E(S)$ of a set $S \subset V$ is the set of edges $(u,v) \in E$ ``exiting'' the set $S$:
\begin{equation}
\partial_E(S) = \{(u,v) \in E: \left| \{u,v\} \cap S\right|= 1\}
\end{equation}
\end{definition}

A discrete isoperimetric problem is a problem of the form

\begin{equation}
\begin{aligned}
& \underset{S \subset V}{\text{minimize}}
& & \# |\partial (S)| \\
& \text{subject to}
& & \#|S|=n
\end{aligned}
\tag{DIP}\label{DIP}
\end{equation}
with $\partial(S)=\partial_V (S)$ in the case of a ``vertex-isoperimetric problem" and $\partial (S)=\partial_E(S)$ in the case of an ``edge-isoperimetric problem".

For certain families of graphs, it makes sense to consider the continuous limit of the problem. The limit of the discrete ``perimeter" turns out to be different from the ordinary perimeter and can be studied using the Brunn-Minkowski theory. We make this precise now.

\begin{definition}
A simple connected graph  $G = (V,E)$ is called a primitive lattice graph (PLG) if it satisfies the following: 
\begin{enumerate}
\item   \label{PLG1}$V $ is a lattice in $\R^d$.
\item \label{PLG2} The map $T_u:V \rightarrow V, \, T_u(v)=v+u$ is an automorphism of $G$ for every $u \in V$.
\item \label{PLG3} The edges $E$ are primitive vectors of the lattice $V$.
\end{enumerate}
\end{definition}
By an isomorphism, we will assume that $V = \Z^d \subset \R^d$. The assumption that $G$ is connected implies that the edge vectors give rise to a full rank lattice and that the convex hull $\conv(\{v_i\}_i)$ has full dimension. The former can be seen from the fact that there is a sequence of edges which leads from the origin to any standard basis vector. For the latter, both $v_i \in E$ and $-v_i \in E$, implying that the affine span of $\{v_i\}_i$ is a linear space. It has full dimension since the lattice has full rank.

\begin{Theorem}\label{DiscToContTheorem}
Suppose $G = (\Z^d, E)$ is a PLG graph with edge segments $\ell_1, \ell_2, \dots, \ell_k, k>0$.  Define the continuous edge-isoperimetric problem (CEIP) in $\R^d$ with boundary function $b$ arising from the EIP of $G$ by
\begin{equation}
\begin{aligned}
& \underset{S \subset \R^d}{\text{minimize}}
& & b(S)=\sum_{i=1}^k D_{\ell_i} S \\
& \text{subject to}
& & |S|=V 
\end{aligned}
\tag{CEIP}\label{opt-CEIP}
\end{equation}
\begin{enumerate}
\item \label{part2} A set $S \subset \R^d$ is a solution to (\ref{opt-CEIP}) if and only if it is homothetic to the zonotope
\begin{equation}
Z = \sum_{i=1}^k \ell_i.
\end{equation}
\item \label{part1} Under assumptions listed in \cite{TVdiscreteIso}, a sequence of optimal solutions to the EIP converges to the lattice points of a homothet of $Z$.
\end{enumerate}
\end{Theorem}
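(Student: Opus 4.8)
The plan is to collapse the sum defining $b(S)$ into a single term and then recognize the resulting functional as an anisotropic surface energy. Each edge segment $\ell_i$ and the zonotope $Z=\sum_i \ell_i$ are convex, so for any competitor $S$ with piecewise $C^1$ boundary I may use the representation \eqref{bla} together with \eqref{lim}, whose combination gives $D_{\ell_i}(S)=\int_{\text{bd }S} h_{\ell_i}(u_S(x))\,d\mathcal{H}^{d-1}(x)$. Summing over $i$ and using that support functions are additive under Minkowski sums, $\sum_{i=1}^k h_{\ell_i}=h_{\sum_i \ell_i}=h_Z$, I obtain
\begin{equation}
b(S)=\sum_{i=1}^k \int_{\text{bd }S} h_{\ell_i}(u_S(x))\,d\mathcal{H}^{d-1}(x)=\int_{\text{bd }S} h_Z(u_S(x))\,d\mathcal{H}^{d-1}(x)=D_Z(S).
\end{equation}
(Equivalently, on convex competitors one may first derive the Minkowski additivity $D_{\sum_i \ell_i}(S)=\sum_i D_{\ell_i}(S)$ from Lemma \ref{LinearityLemma} by induction.) Thus the CEIP is exactly the problem of minimizing the single first mixed volume $D_Z(S)=d\,V_1(S,Z)$ subject to $|S|=V$.

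For the homothety characterization I would invoke the Wulff theorem, equivalently the anisotropic isoperimetric inequality. The functional $S\mapsto\int_{\text{bd }S} h_Z(u_S)\,d\mathcal{H}^{d-1}$ is precisely the anisotropic perimeter with surface tension $\gamma=h_Z$, and its associated Wulff shape is
\begin{equation}
W_\gamma=\{x\in\R^d : \langle x,u\rangle\le h_Z(u)\text{ for all }u\in\mathbb{S}^{d-1}\}=Z,
\end{equation}
since a convex body equals the intersection of its supporting halfspaces. The Wulff inequality then asserts that, among admissible sets of volume $V$, the energy $b$ is minimized exactly by the homothets of $W_\gamma=Z$, which is the desired statement. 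Restricted to convex $S$ this is just Minkowski's first inequality $V_1(S,Z)^d\ge|S|^{d-1}|Z|$, with equality if and only if $S$ and $Z$ are homothetic, so that $b(S)=d\,V_1(S,Z)$ is minimized precisely at homothets of $Z$.

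The step I expect to be the main obstacle is the equality (rigidity) case of this inequality over the full admissible class, namely possibly nonconvex sets with piecewise $C^1$ boundary rather than convex bodies alone. One clean route is to reduce to the convex case: establish that passing from $S$ to $\conv(S)$ does not increase the anisotropic perimeter $\int_{\text{bd}} h_Z(u)\,d\mathcal{H}^{d-1}$ while not decreasing the volume, then rescale to restore $|S|=V$ (which only decreases $b$), after which the sharp Minkowski inequality and its equality case apply directly. Proving this monotonicity under convexification for the convex surface tension $h_Z$, together with a careful treatment of the uniqueness of minimizers up to translation, is where the real work concentrates.

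Finally, for the convergence claim I would appeal to the hypotheses imported from \cite{TVdiscreteIso}. The idea is that, under the natural rescaling, the discrete edge-boundary count $\#|\partial_E(S)|$ converges in the sense made precise there to the continuous functional $b$, each edge direction $\ell_i$ crossing the boundary of the scaled region contributing in the limit the term $D_{\ell_i}(S)$. Granting this convergence of functionals and the compactness needed to extract a limit along a sequence of discrete optima, the limiting set must minimize $b$ at the prescribed volume; by the first part that limit is a homothet of $Z$, so the optimal discrete configurations converge to the lattice points of a homothet of $Z$.
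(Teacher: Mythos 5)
Your proposal is correct and takes essentially the same route as the paper: this theorem is quoted from \cite{TVdiscreteIso} rather than proved here, and the paper's subsequent remark (that it cannot treat the vertex case the same way ``because $\cup_i \ell_i$ is not generally convex'') confirms that the cited proof proceeds exactly as you do --- collapsing $b(S)=\sum_i D_{\ell_i}(S)$ into the single functional $D_Z(S)$ using convexity of the segments, then applying Brunn--Minkowski theory, i.e.\ Minkowski's first inequality with its equality case. Your alternative formulation via the Wulff theorem is the same device the paper itself deploys for the vertex-isoperimetric analogue (Theorem \ref{DiscToContVIPTheorem}), and your deferral of the convergence claim to the assumptions of \cite{TVdiscreteIso} mirrors what the theorem statement does.
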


There is a similar continuous version of the vertex-isoperimetric problem. Here the boundary is replaced by $D_{\cup_i \ell_i} (S )$. In that present form, we are not able to apply Brunn-Minkowski theory as was done in the proof of Theorem \ref{DiscToContTheorem} to obtain a solution because $\cup_i \ell_i$ is not generally convex.

\section{Main Results}

Throughout this section, we will assume that $M,N \subset \R^d$ with $M$ a compact domain having a piecewise $C^1$ boundary $\partial M$ and $N$ is bounded. 

\begin{definition}
Let $y \in \partial M$ be a smooth point with outer normal ray $r=r_y(M)$. Given a bounded set $N \subset \R^d$ with center of mass at the origin, define the local expansion of $M$ at $y$ to be the set
\beq
Q_{y,M}(N)=r \cap (M+N)
\eeq
\end{definition}

\begin{figure}[hbtp]
\centering
\includegraphics[width=5in]{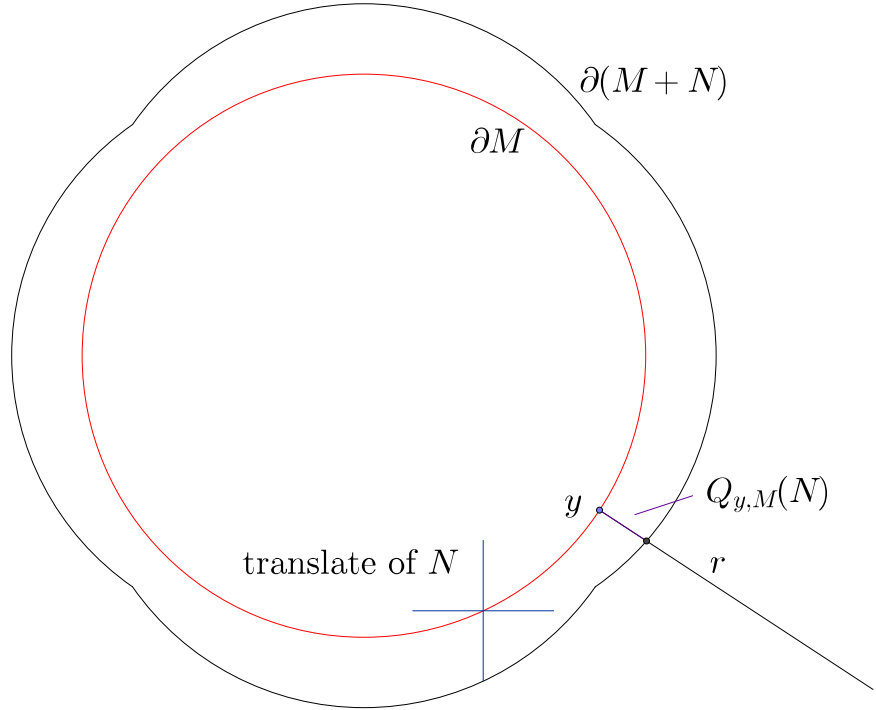}
\caption{\label{c1v2} $M$, $M+N$, and the local expansion $Q_{y,M}(N)$ to $M$ at $y$.}
\end{figure}

Although one may suggest a definition for a local expansion at singular points, we shall refrain from doing so.

\begin{lemma}\label{localexpansion}
Let $n(y)$ be the outer normal at a smooth point $y \in \partial M$. Then
\beq
\lim_{\epsilon\downarrow 0} \frac{|Q_{y,M}(\epsilon N)|}{\epsilon}=h_N(n(y)).
\eeq
\end{lemma}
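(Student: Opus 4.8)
The plan is to turn the statement into a one-dimensional length computation along the ray and to compare $M$ near $y$ with its tangent half-space. Write $n = n(y)$ for the unit outer normal and parametrize the ray by $r(t) = y + t n$, $t \ge 0$. Since $Q_{y,M}(\epsilon N) = r \cap (M + \epsilon N)$ lies on a single line, $|Q_{y,M}(\epsilon N)|$ is the one-dimensional Lebesgue measure of $A_\epsilon = \{\, t \ge 0 : r(t) \in M + \epsilon N\,\}$; throughout I take $A_\epsilon$ (equivalently $Q_{y,M}$) to mean the connected component meeting $y$, which is the sense in which the expansion is local. Observe that $h_N(n) = \sup_{v \in N}\langle v, n\rangle$ is finite because $N$ is bounded, and $h_N(n) \ge 0$ because the center of mass of $N$ at the origin forces $0 \in \conv(N)$ and hence $h_N(n) = h_{\conv(N)}(n) \ge \langle 0, n\rangle = 0$. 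The membership test unwinds to
\[
r(t) \in M + \epsilon N \iff \exists\, v \in N : \ r(t) - \epsilon v \in M ,
\]
so everything reduces to deciding, for each $t$, whether some inward translate $y + t n - \epsilon v$ returns to $M$.

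Next I would localize. Because $y$ is a smooth boundary point, there is a neighborhood $U$ of $y$ and a $C^1$ function $g$ with $g(y)=0$ and $\nabla g(y)=0$ such that, in suitable coordinates, $M \cap U = \{\, x \in U : \langle x - y, n\rangle \le g(x)\,\}$, and the $C^1$ property yields the tangency estimate $g(x) = o(|x-y|)$ as $x \to y$. If $N \subseteq B(0,R)$, then every test point $r(t) - \epsilon v$ relevant to the component at $y$ will be shown to lie within $O(\epsilon)$ of $y$, hence inside $U$ for all small $\epsilon$; this is exactly what lets me discard any far-away re-entries of the ray into a nonconvex $M$ (which would otherwise contribute length of order $1$). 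Substituting the local description, $r(t) - \epsilon v \in M$ becomes
\[
t - \epsilon \langle v, n\rangle \le g\bigl(r(t) - \epsilon v\bigr) = o(\epsilon),
\]
where the error is uniform in $v \in N$ once all tested points sit within $O(\epsilon)$ of $y$.

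From here the two matching bounds are routine. For the upper bound, any $t \in A_\epsilon$ obeys $t \le \epsilon \langle v, n\rangle + o(\epsilon) \le \epsilon\, h_N(n) + o(\epsilon)$, so $\sup A_\epsilon \le \epsilon\, h_N(n) + o(\epsilon)$. For the lower bound, fix $\eta > 0$, choose $v \in N$ with $\langle v, n\rangle \ge h_N(n) - \eta$, and check that for every $t \le \epsilon\bigl(h_N(n) - \eta\bigr) - o(\epsilon)$ the point $r(t) - \epsilon v$ lies strictly on the interior side of the graph, hence in $M$; thus $[0,\epsilon(h_N(n)-\eta) - o(\epsilon)] \subseteq A_\epsilon$. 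That $A_\epsilon$ is an interval containing $0$ follows since sliding a feasible witness back along $-n$ only decreases its normal coordinate while fixing the tangential ones, keeping it below the graph. Combining the bounds gives $|A_\epsilon| = \epsilon\, h_N(n) + o(\epsilon)$, and dividing by $\epsilon$ with $\epsilon \downarrow 0$ (then $\eta \downarrow 0$) yields the claim.

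The step I expect to be the main obstacle is making the $o(\epsilon)$ error uniform over $v \in N$ while $N$ is allowed to be neither closed nor convex: the $C^1$ graph comparison must be controlled simultaneously for the whole $\epsilon$-scaled family of translates, and the supremum defining $h_N(n)$ need not be attained, which is why the lower bound is routed through $\eta$-approximate maximizers rather than an exact one. The localization that legitimately ignores the global behavior of the ray is the second delicate point, and it is justified precisely because all relevant translates remain within $O(\epsilon)$ of $y$.
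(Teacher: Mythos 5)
Your proof is correct and takes essentially the same route as the paper's: both compare $M$ near $y$ with its tangent half-space by writing $\partial M$ locally as a graph over the tangent plane, use differentiability to get an error that is $o(\epsilon)$ uniformly over the $\epsilon$-scaled translates of the bounded set $N$, and use boundedness of $N$ to confine all relevant test points to an $O(\epsilon)$-neighborhood of $y$ so that far-away re-entries of the ray can be ignored. Your write-up is, if anything, more complete than the paper's (which explicitly records only the lower bound on the expansion length $T(\epsilon)$), since you also carry out the matching upper bound and handle a possibly non-attained supremum via $\eta$-approximate maximizers.
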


\begin{proof}
It will be convenient to discuss first a situation in which $M$ is replaced by a half plane $H=\{x \in \R^d : x \cdot n \leq c\} $. Let $n$ be the unit outer normal at $y$. We have
\beq
H+\epsilon N =H+\epsilon \sup_{x \in N} x \cdot n=H+\epsilon h_N(n).
\eeq

\begin{figure}[H]
\centering
\includegraphics[width=5in]{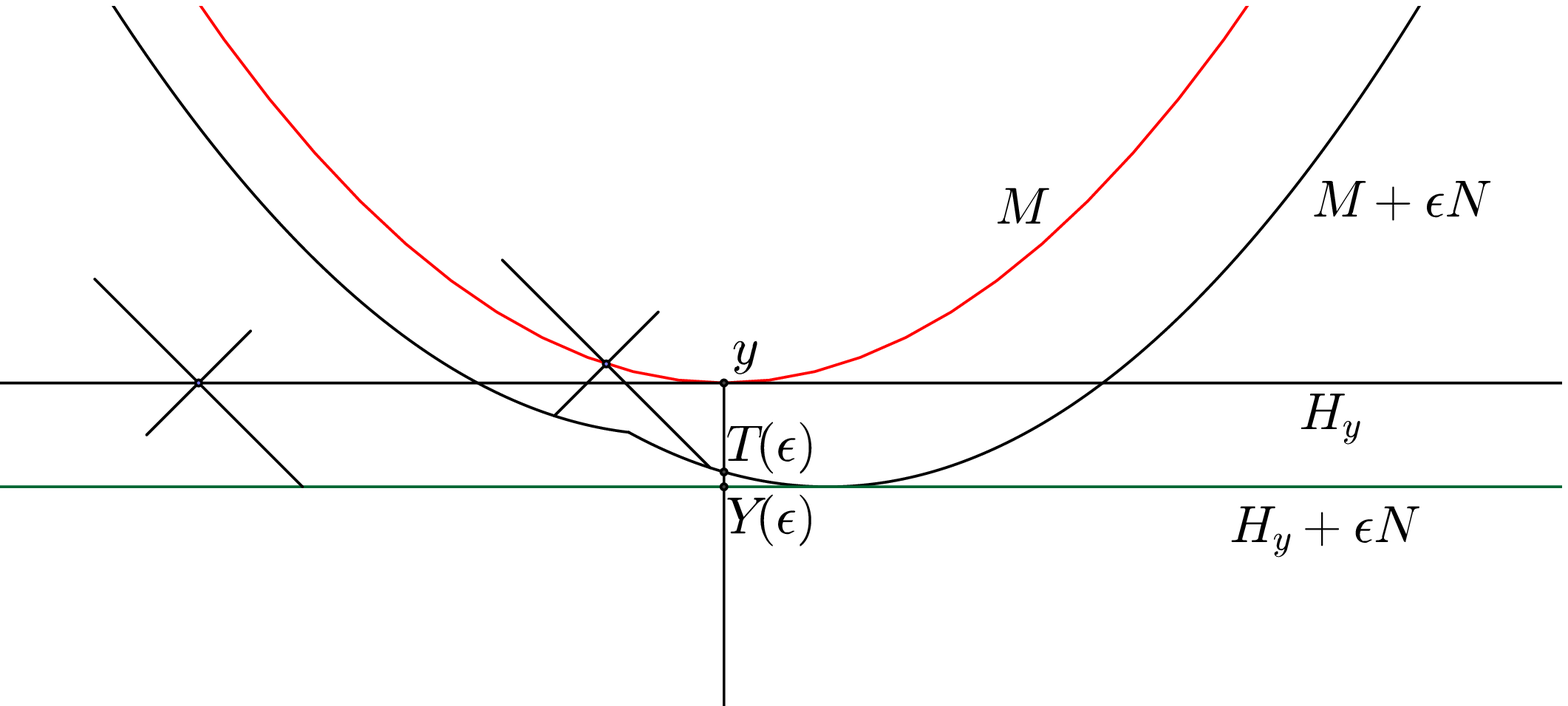}
\caption{\label{local_graph}}
\end{figure}

Now let $M$ be given. Set $w=\diam N$. Represent the boundary in a neighborhood of $y$ as the graph of a function $f:B_R^{d-1} \rightarrow \R$ with $(0,f(0))=y$. The function $f$ is differentiable at $0$. Take $\epsilon$ so that $2 \epsilon w>R>\epsilon w$. The condition $R>\epsilon w$ guarantees that translates of the form $x+\epsilon N$ for $x$ outside of the neighborhood of $y$ are too far to affect the local expansion at $y$. The condition $2 \epsilon w > R$ is needed to shrink the neighborhood of $y$.

Let $H_y$ be the supporting hyperplane to $M$ at $y$. Let $r$ be the normal ray to $y$ and let $Y(\epsilon)$ be the distance from the intersection of $\partial (H_y+ \epsilon N)$ with $r$ to $y$ and $T(\epsilon)$ be the distance between the intersection of $\partial (M+ \epsilon N)$ with $r$ and $y$ (see Figure \ref{local_graph}). 

From the case of the half-space, $\lim_{\epsilon \downarrow 0} \frac{|Y(\epsilon)|}{\epsilon}=\sup_{x \in N} x \cdot n$. For any $h \in B_R^{d-1}$, we have 
\beq
f(h)=f(0)+Df(0)h+r(h),  \quad \lim_{h\rightarrow 0} \frac{\|r(h)\|}{\|h\|}=0.
\eeq
Assume without loss of generality that the outer normal $n(y)$ points ``down", in the $-x_d$ direction. Let $(m,f(m))$ be a global maximum of $|f|$. Then 
\beq
T(\epsilon) \geq \sup_{x \in \epsilon N} x \cdot n+f(0)-f(m)=\epsilon \sup_{x \in  N} x \cdot n+f(0)-f(m)=\epsilon \sup_{x \in  N} x \cdot n-Df(0)m-r(m).
\eeq
We have $\|m\| \leq R <2 \epsilon w$. Therefore
\beq
\lim_{\epsilon \downarrow 0} \frac{|T(\epsilon)|}{\epsilon} \geq \lim_{\epsilon \downarrow 0} \frac{\epsilon \sup_{x \in  N} x \cdot n-Df(0)m-r(m)}{\epsilon} = \sup_{x \in  N} x \cdot n.
\eeq

\end{proof}

 \begin{Theorem} \label{dconv} For a compact domain $M \subset \R^d$ with piecewise $C^1$ boundary and bounded $N \subset \R^d$,
 \beql{dconvex}
D_N(M)=D_{\conv(N)}(M)
\eeq
and
 \beql{supportequation}
 D_N(M)=\int_{\text{bd }M} h_N(u_M(x)) \, d \mathcal{H}^{d-1}(x).
\eeq

 \end{Theorem}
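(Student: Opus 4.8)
The plan is to prove the support-function formula \eqref{supportequation} first by integrating the pointwise expansion rate from Lemma \ref{localexpansion} over $\partial M$, and then to deduce the identity \eqref{dconvex} as an immediate corollary from the fact that a set and its convex hull have the same support function.

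The core idea for \eqref{supportequation} is to express the volume increment $|M + \epsilon N| - |M|$ as an integral over the boundary of the ``local expansion thickness'' in the normal direction, and then pass to the limit. Concretely, I would use the fact that $M$ is a compact domain with piecewise $C^1$ boundary, so that $\partial M$ is smooth $\mathcal{H}^{d-1}$-almost everywhere (the singular set is a lower-dimensional piecewise $C^1$ stratum and hence has $\mathcal{H}^{d-1}$-measure zero). At each smooth point $y$ the outer normal ray $r_y(M)$ and the local expansion $Q_{y,M}(\epsilon N)$ are defined, and Lemma \ref{localexpansion} gives $\lim_{\epsilon \downarrow 0} |Q_{y,M}(\epsilon N)|/\epsilon = h_N(n(y)) = h_N(u_M(x))$. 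The plan is to decompose the added region $(M + \epsilon N) \setminus M$, for $\epsilon$ small, via a normal-coordinate (tubular neighborhood) parametrization of a neighborhood of $\partial M$: write the volume of the added region as $\int_{\partial M}\bigl(\text{normal thickness at } x\bigr)\,d\mathcal{H}^{d-1}(x)$ up to a Jacobian that tends to $1$ as $\epsilon \downarrow 0$. Dividing by $\epsilon$ and interchanging the limit with the integral using dominated convergence then yields
\[
\lim_{\epsilon \downarrow 0} \frac{|M+\epsilon N| - |M|}{\epsilon} = \int_{\partial M} \lim_{\epsilon\downarrow 0}\frac{\text{(thickness)}}{\epsilon}\, d\mathcal{H}^{d-1}(x) = \int_{\text{bd }M} h_N(u_M(x))\, d\mathcal{H}^{d-1}(x),
\]
where the integrand is exactly the limit computed in Lemma \ref{localexpansion}. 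The domination needed to apply the convergence theorem comes from the boundedness of $N$: since $N$ is contained in a ball of radius $w = \diam N$, the thickness of the expansion at any point is uniformly bounded by $\epsilon w$ plus a controlled error, and $|h_N(u)| \le w$ on the unit sphere.

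Once \eqref{supportequation} is established, \eqref{dconvex} follows at once. The support function of any bounded set equals that of its closed convex hull, $h_N = h_{\conv(N)}$, because $\sup_{x \in N} x\cdot u = \sup_{x \in \conv(N)} x \cdot u$ for every direction $u$. Applying \eqref{supportequation} to both $N$ and $\conv(N)$ gives two integrals with identical integrands, hence $D_N(M) = \int_{\text{bd }M} h_N(u_M)\,d\mathcal{H}^{d-1} = \int_{\text{bd }M} h_{\conv(N)}(u_M)\,d\mathcal{H}^{d-1} = D_{\conv(N)}(M)$.

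The main obstacle will be making the reduction of the global volume difference to the boundary integral fully rigorous: controlling the behavior near the singular set of $\partial M$ and justifying that contributions from overlaps of normal tubes (where the tubular-neighborhood map fails to be injective) and from the non-smooth corners are of order $o(\epsilon)$, so that they vanish after dividing by $\epsilon$ and taking the limit. Since the singular set has $\mathcal{H}^{d-1}$-measure zero and the expansion added near it has volume $O(\epsilon^2)$ times the $(d-2)$-dimensional measure of the singular stratum, these terms do not affect the limit, but verifying this uniformly requires the compactness of $M$ and the boundedness of $N$ to bound all geometric quantities uniformly. The interchange of limit and integral via dominated convergence, together with this measure-zero control of the singular set, is the technical heart of the argument.
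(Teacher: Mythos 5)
Your proposal inverts the paper's logical order, and the two routes are genuinely different. The paper does not prove the integral formula \eqref{supportequation} directly: it first proves \eqref{dconvex}, by observing (via Lemma \ref{localexpansion}) that the local expansion rates of $M$ with respect to $\epsilon N$ and $\epsilon \conv(N)$ agree at every smooth boundary point --- this is exactly your observation that $h_N = h_{\conv(N)}$ --- so any discrepancy between $D_N(M)$ and $D_{\conv(N)}(M)$ can only come from the singular set of $\partial M$; that contribution is bounded by $D_{B_\rho}(\sing(\partial M))$ with $\rho = \diam \conv(N)$, which vanishes because the singular set of a piecewise $C^1$ boundary has zero $(d-1)$-dimensional Minkowski content. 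Equation \eqref{supportequation} is then inherited from the literature: for a convex body $K$, the limit relation \eqref{lim} and the boundary-integral formula \eqref{bla} are already known to hold when $M$ is compact with piecewise $C^1$ boundary \cite{MR1776095}, so the paper applies them to $K = \conv(N)$ and uses $h_{\conv(N)} = h_N$. In short, the paper outsources the hard analytic step (volume increment equals boundary integral) to the known convex case and only performs the nonconvex-to-convex comparison; you propose to prove the hard analytic step from scratch for arbitrary bounded $N$, which would make the theorem self-contained and independent of the cited convex-body result.

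The weak point of your plan is precisely the step you call the technical heart, and as stated it would fail: the normal-coordinate (tubular neighborhood) parametrization with ``a Jacobian that tends to $1$'' is not available when $\partial M$ is only piecewise $C^1$. The unit normal field of a $C^1$ hypersurface is merely continuous, so the map $(x,t) \mapsto x + t\, u_M(x)$ need not be injective on $\partial M \times (0,\delta)$ for any $\delta > 0$ (such boundaries can fail to have positive reach), and it is not differentiable in $x$, so there is no Jacobian to control. The repair, staying within your approach, is to localize and use graph coordinates instead of normal coordinates, as in the proof of Lemma \ref{localexpansion}: cover $\partial M$ minus its singular set by finitely many patches on which $\partial M$ is a $C^1$ graph, compute the added volume over each patch by Fubini along the fixed vertical direction (the vertical thickness is approximately $\epsilon h_N(u)/\lvert\langle u, e_d\rangle\rvert$ while $dz = \lvert\langle u, e_d\rangle\rvert \, d\mathcal{H}^{d-1}$, so the factors cancel), and dispose of the singular set by the same Minkowski-content-zero bound the paper uses. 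Note also that your dominated-convergence bound ``thickness $\le \epsilon w$ plus controlled error'' requires interpreting the local expansion as the connected component of $r \cap (M + \epsilon N)$ at $y$: for nonconvex $M$ the full normal ray can re-enter $M + \epsilon N$ far from $y$, where the intersection has length bounded below independently of $\epsilon$. With these repairs your route goes through, and your final deduction of \eqref{dconvex} from $h_N = h_{\conv(N)}$ is correct and immediate.
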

 
 \begin{proof}
 By Lemma \ref{localexpansion}, the local expansions of $M$ with respect to $\epsilon N$ and with respect to $\epsilon \conv(N)$ converge in the limit $\epsilon \downarrow 0$. The difference in $D_{\cdot}(M)$ can only occur due to singularities and is bounded above by $D_{\conv(N)}(\sing(\partial M))$. Noting that $\conv(N) \subset B_\rho$, we can write
 \beql{diff}
D_{\conv(N)}(M)-D_N(M) \leq  D_{\conv(N)}(\sing(\partial M)) \leq D_{B_\rho}(\sing(\partial M)) , \quad \rho=\diam\conv(N).
\eeq
By hypothesis, the minkowski content of the singularities of $M$ is zero.
 \end{proof}
 
 In the notation of the background section, we have shown that if $M,N$ satisfy our running assumptions, then, even for nonconvex $N$,
 \beq
V_1(M,N)=\frac{1}{d}\lim_{\epsilon \downarrow 0} \frac{V_n(M+\epsilon N)-V_n(M)}{\epsilon}=\frac{1}{d}\int_{\text{bd }M} h_N(u_M(x)) d \mathcal{H}^{d-1}(x)
 \eeq

\section{Implications}

The function $D_{\conv(N)}(\cdot)$ is a continuous, translation invariant valuation on convex bodies. We have shown that $D_{N}(\cdot)$ agrees with $D_{\conv(N)}(\cdot)$ on a significant subset of this domain, i.e., on the convex bodies with piecewise $C^1$ boundaries. We will show now that, still, $D_{N}(\cdot)$ is not a continuous valuation on convex bodies.

To do so, we recall an important Theorem from the theory of valuations:

\begin{Theorem}{\cite[Theorem 6.3.5]{MR3155183}} Let $\phi$ be a translation invariant, continuous valuation on $\mathcal{K}^d$ with values in a real topological vector space. Then there are continuous, translation invariant valuations $\phi_0,\ldots,\phi_d$ on $\mathcal{K}^d$ such that $\phi_i$ is homogeneous of degree $i$ ($i=0,\ldots,d$) and
\beq
\phi(\lambda K)=\sum_{i=0}^d \lambda^i \phi_i(K) \text{ for } K \in \mathcal{K}^d\text{ and } \lambda \geq 0.
\eeq
In particular, $\phi=\phi_0+\cdots+\phi_d$.

\end{Theorem}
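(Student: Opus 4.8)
The plan is to prove this by McMullen's dissection argument: reduce everything to polytopes, show that dilation acts on $\phi$ polynomially, and read off the homogeneous components as the coefficients of that polynomial. Write $\phi$ for the given translation-invariant continuous valuation. Since polytopes are dense in $\mathcal{K}^d$ in the Hausdorff metric and $\phi$ is continuous, any identity between continuous functions of the body that I establish on polytopes extends automatically to all of $\mathcal{K}^d$; so the real work is a statement about polytopes. Concretely, the first goal is: for a fixed polytope $P$, the map $n \mapsto \phi(nP)$ on nonnegative integers $n$ agrees with a polynomial in $n$ of degree at most $d$.

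The polynomiality is the crux, and I prove it by induction on $\dim P$. The base case $\dim P = 0$ is immediate, since $nP$ is a translate of the point $P$ and so $\phi(nP)$ is constant. For the inductive step I examine the first forward difference $g(n+1)-g(n)=\phi((n+1)P)-\phi(nP)$, where $g(n)=\phi(nP)$, and dissect the shell $(n+1)P\setminus\operatorname{int}(nP)$. The engine of the induction is that for any fixed vector $v$ the prism map $\psi_v(Q):=\phi\!\left(Q+[0,v]\right)$ is again a translation-invariant valuation, but on polytopes of one dimension less: both its translation invariance and the inclusion--exclusion identity follow from $(Q_1\cup Q_2)+[0,v]=(Q_1+[0,v])\cup(Q_2+[0,v])$, the analogous identity for intersections, and the translation invariance of $\phi$. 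Decomposing the shell into prism-type pieces erected over the facets of $P$ and invoking the inductive hypothesis for the resulting $(d-1)$-dimensional valuations shows that $g(n+1)-g(n)$ is a polynomial of degree $\le d-1$ in $n$, whence $g$ has degree $\le d$. Carrying out this dissection precisely --- arranging the pieces so that translation invariance is preserved, and accounting for their lower-dimensional overlaps by inclusion--exclusion after extending $\phi$ to finite unions of polytopes via Groemer's theorem --- is the genuinely technical heart of the argument and the main obstacle.

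Granting the polynomiality, write $\phi(nP)=\sum_{i=0}^{d} p_i(P)\,n^i$. Because the Vandermonde matrix $(n^i)_{0\le n,i\le d}$ is invertible, there are fixed real scalars $a_{in}$ with $p_i(P)=\sum_{n=0}^{d}a_{in}\,\phi(nP)$, and I simply take this as the definition of $\phi_i$ on all of $\mathcal{K}^d$, namely $\phi_i(K):=\sum_{n=0}^{d}a_{in}\,\phi(nK)$. Each $K\mapsto\phi(nK)$ is a continuous translation-invariant valuation, since the dilation $K\mapsto nK$ is Hausdorff-continuous and commutes with unions, intersections and translations; hence every $\phi_i$ is a continuous translation-invariant valuation. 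This gives continuity for free and avoids having to transport coefficient data off the polytopes by hand.

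It remains to check, on polytopes, that $\phi(\lambda K)=\sum_{i=0}^{d}\lambda^i\phi_i(K)$ and that $\phi_i$ is homogeneous of degree $i$; both then pass to $\mathcal{K}^d$ by density and continuity. For integer $\lambda$ this is the polynomiality just proved; for rational $\lambda=a/b$ I apply that result to the polytope $(a/b)P$ and match coefficients along the subsequence $n=bk$, obtaining $\phi((a/b)P)=\sum_i p_i(P)(a/b)^i$, and continuity in $\lambda$ then covers all real $\lambda\ge 0$. Homogeneity $\phi_i(\mu K)=\mu^i\phi_i(K)$, translation invariance $\phi_i(K+t)=\phi_i(K)$, and the valuation identity for $\phi_i$ all follow by writing the corresponding identity for $\phi$ at every scale $\lambda$ --- using $\lambda(K\cup L)=\lambda K\cup\lambda L$, $\lambda(K\cap L)=\lambda K\cap\lambda L$, and $\lambda(K+t)=\lambda K+\lambda t$ --- and equating coefficients of the resulting polynomials in $\lambda$. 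Taking $\lambda=1$ gives $\phi=\phi_0+\cdots+\phi_d$. Finally, since the whole argument manipulates only linear combinations and polynomial coefficients of the values of $\phi$, it is insensitive to those values lying in a real topological vector space rather than in $\R$, so the vector-valued hypothesis introduces no additional difficulty.
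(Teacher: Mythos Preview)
The paper does not prove this theorem at all: it is quoted verbatim from Schneider's book \cite[Theorem~6.3.5]{MR3155183} and used as a black box to argue that $D_N(\cdot)$ cannot be a continuous translation-invariant valuation. So there is no ``paper's own proof'' to compare against.

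As for your proposal itself, it follows the standard McMullen route and is essentially sound. The global definition $\phi_i(K):=\sum_{n=0}^{d}a_{in}\,\phi(nK)$ via Vandermonde inversion is a clean way to get continuity and the valuation property for free, and your passage from integer to rational to real dilations is correct. The one place where you explicitly defer the work is the shell dissection $(n+1)P\setminus\operatorname{int}(nP)$; this is indeed where the genuine combinatorics lives, and a referee would want to see it spelled out (the usual device is to write $(n+1)P$ as a union of $nP$ and translates of prisms over the facets, and to handle the overlaps by inclusion--exclusion using the extension of $\phi$ to the polytope algebra). Short of that omitted detail, the argument is the right one and would reproduce the cited result.
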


As a consequence, if $D_N$ is to be a continuous valuation on $\mathcal{K}^d$, applying $D_{\epsilon N}(\cdot)$ to a convex body should yield a polynomial in $\epsilon$. However, this is not the case.

\begin{Example}
Let $\ell_1=[(-1,0),(0,1)]$ and $\ell_2=[(0,-1),(0,1)]$ and $N=\ell_1 \cup \ell_2$. Let $K$ be the circle of radius $1$ centered at the origin. Refer to Figure \ref{c1v2} for helpful illustration.

The set $K+\epsilon \ell_1$ is the convex hull of the circles of radius $1$ centered at $(0,\pm \epsilon)$ and the set  $K+\epsilon \ell_2$ is the convex hull of the circles of radius $1$ centered at $(\pm \epsilon,0)$. The set $K+\epsilon N$ is the union of these two sets. The boundary naturally subdivides into $4$ arcs. The intersection points of the arcs have $x$-coordinates $\pm \frac{\epsilon+\sqrt{2-\epsilon^2}}{2}$ .

The volume of $K+\epsilon N$ is calculated from
\beq
\frac{|K+\epsilon N|}{2}=\int_{-1-\epsilon}^{-\frac{\epsilon+\sqrt{2-\epsilon^2}}{2}} \sqrt{1-(x+\epsilon)^2} \, dx+\int_{-\frac{\epsilon+\sqrt{2-\epsilon^2}}{2}}^{\frac{\epsilon+\sqrt{2-\epsilon^2}}{2}} \epsilon+\sqrt{1-x^2} \, dx+\int_{\frac{\epsilon+\sqrt{2-\epsilon^2}}{2}}^{1+\epsilon} \sqrt{1-(x-\epsilon)^2} \, dx.
\eeq
A computation in Mathematica shows that
\begin{dmath}
|K+\epsilon N|=\pi+2 \epsilon^2+2 \sqrt{2-\epsilon^2} \epsilon+\frac{1}{2} \sqrt{2-2 \epsilon \sqrt{2-\epsilon^2}} \epsilon+\frac{1}{2} \sqrt{2 \sqrt{2-\epsilon^2} \epsilon+2} \epsilon+\frac{1}{2} \sqrt{2-\epsilon^2} \sqrt{2-2 \epsilon \sqrt{2-\epsilon^2}}-\frac{1}{2}
   \sqrt{2-\epsilon^2} \sqrt{2 \sqrt{2-\epsilon^2} \epsilon+2}+{2 \sin ^{-1}(\frac{1}{2} (\epsilon-\sqrt{2-\epsilon^2}))}+2 {\sin ^{-1}(\frac{1}{2} (\sqrt{2-\epsilon^2}+\epsilon))}
\end{dmath}
\beq
=\pi +4 \sqrt{2} \epsilon+2 \epsilon^2-\frac{\sqrt{2} \epsilon^3}{3}-\frac{\epsilon^5}{20 \sqrt{2}}-\frac{\epsilon^7}{112 \sqrt{2}}-\frac{5 \epsilon^9}{2304 \sqrt{2}}-\frac{7 \epsilon^{11}}{11264 \sqrt{2}}-\frac{21
   \epsilon^{13}}{106496 \sqrt{2}}+O(\epsilon^{15}).
\eeq
Note that the first-order term in $\epsilon$ is equal to $D_{\conv(N)}(K)$ as expected.
\end{Example}

In particular, $D_{N\epsilon }(\cdot)$ is not polynomial in $\epsilon$. This also shows that $D_N(\cdot)$ differs from $D_{\conv(N)}(\cdot)$ on a convex set $K$ with boundary singularities having a nonzero minkowski content.

\bigskip
Another application of the main results is that in the continuous vertex-isoperimetric problem,  assuming $S$ is restricted to be a compact domain with piecewise $C^1$ boundary, we can applyequation \eqref{supportequation} in conjunction with the theory of Wulff shapes to find the solutions.

We recall the main result concerning Wulff shapes \cite{MR0031762, MR1130601, MR1898210}. Let $\partial^*S$ denote the reduced boundary of a measurable set $S$ and let $\Gamma:\mathbb{S}^{d-1} \rightarrow [0,\infty)$. The Wulff Theorem states that the variational problem

\begin{equation}
\begin{aligned}
& \underset{S \subset \R^d}{\text{minimize}}
& & \int_{\partial^* S} \Gamma(n_S(x)) \, d \mathcal{H}^{d-1} \\
& \text{subject to}
& & |S|=V, \text{ the perimeter of }S \text{ is finite,}
\end{aligned}
\tag{Wulff}\label{Wulff}
\end{equation}
has a unique solution up to translation and sets of measure zero, given by the Wulff set (or crystal of $\Gamma$) 
\beq
W_\Gamma:=\{x \in \R^d  :  x \cdot u \leq \Gamma(u), u \in \mathbb{S}^{d-1}\}.
\eeq

Suppose $S$ is a compact domain with piecewise $C^1$ boundary. By equation \eqref{supportequation} of Theorem \ref{dconv}, $D_{\cup_i \ell_i}(S)=\int_{\text{bd }S} h_{\cup_i \ell_i}(u_S(x)) \, d \mathcal{H}^{d-1}(x)$. Taking $\Gamma=h_{\cup_i \ell_i}$ and applying the Wulff Theorem yields

\begin{Theorem}\label{DiscToContVIPTheorem}
Suppose $G = (\Z^d, E)$ is a PLG graph with edge segments $\ell_1, \ell_2, \dots, \ell_k, k>0$.  Define the continuous vertex-isoperimetric problem (CVIP) in $\R^d$ with boundary function $b$ arising from the EIP of $G$ by
\begin{equation}
\begin{aligned}
& \underset{S \subset \R^d}{\text{minimize}}
& & b(S)=D_{\cup_i \ell_i} (S )\\
& \text{subject to}
& & |S|=V 
\end{aligned}
\tag{CVIP}\label{opt-CVIP}
\end{equation}
\begin{enumerate}
\item \label{part1vip} If $S$ is a compact domain solution to (\ref{opt-CVIP}) with piecewise $C^1$ boundary, then $S$ is homothetic to 
\beq
P:=\conv(\bigcup_i \ell_i),
\eeq
up to sets of measure zero.
\item \label{part2vip} Under assumption listed in \cite{TVdiscreteIso}, a sequence of optimal solutions to the VIP converges to the lattice points of a homothet of $P$.
\end{enumerate}
\end{Theorem}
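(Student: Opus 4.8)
The plan is to recognize the CVIP objective as a Wulff surface energy and then combine the Wulff Theorem with the elementary fact that the crystal of a support function reconstructs the underlying convex body. Since the competitor $S$ is a compact domain with piecewise $C^1$ boundary, Theorem~\ref{dconv} applies and equation~\eqref{supportequation} rewrites the objective as
\beq
b(S)=D_{\cup_i \ell_i}(S)=\int_{\text{bd }S} h_{\cup_i \ell_i}(u_S(x)) \, d\mathcal{H}^{d-1}(x).
\eeq
Setting $\Gamma=h_{\cup_i \ell_i}$, this is the energy functional of problem~\eqref{Wulff}, once we check that the two integrals agree: the one above is taken over $\text{bd }S$ with outer normal $u_S$, whereas~\eqref{Wulff} integrates over the reduced boundary $\partial^* S$ with measure-theoretic normal $n_S$. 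For a compact domain with piecewise $C^1$ boundary these differ only on the singular set $\sing(\partial S)$, which has $\mathcal{H}^{d-1}$-measure zero, and $u_S=n_S$ holds $\mathcal{H}^{d-1}$-almost everywhere; so the objectives coincide.

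Next I would verify the hypotheses of the Wulff Theorem. The map $\Gamma=h_{\cup_i \ell_i}:\mathbb{S}^{d-1}\to[0,\infty)$ is well defined and nonnegative: because $G$ is a PLG its edge set is centrally symmetric (both $v_i$ and $-v_i$ are edges), so $\cup_i \ell_i$ is symmetric about the origin, whence $0\in\conv(\cup_i \ell_i)$ and $h_{\cup_i \ell_i}(u)=\sup_{y\in\cup_i \ell_i} y\cdot u\ge 0$ for every $u\in\mathbb{S}^{d-1}$. The Wulff Theorem then yields a unique minimizer, up to translation and sets of measure zero, equal to the crystal $W_\Gamma$, with the homothety scale fixed by the volume constraint $|S|=V$.

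The decisive step is the identification of $W_\Gamma$. Support functions depend only on the convex hull, so $h_{\cup_i \ell_i}=h_{\conv(\cup_i \ell_i)}=h_P$, and the crystal of the support function of a convex body returns that body:
\beq
W_{h_P}=\{x\in\R^d : x\cdot u\le h_P(u),\ u\in\mathbb{S}^{d-1}\}=P,
\eeq
the last equality being the representation of a convex body as the intersection of its supporting halfspaces. This gives part~\ref{part1vip}: any compact, piecewise $C^1$ minimizer is a homothet of $P$ up to a set of measure zero. Part~\ref{part2vip} is not established here; it follows from the discrete-to-continuous convergence machinery of~\cite{TVdiscreteIso}, under the assumptions listed there, which transfer the continuous characterization to convergence of optimal VIP solutions onto the lattice points of a homothet of $P$.

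The main obstacle I anticipate is reconciling the admissible classes. The Wulff Theorem minimizes over all finite-perimeter sets, while the statement restricts a priori to compact domains with piecewise $C^1$ boundary, so one must rule out that this restriction either excludes the true minimizer or creates a spurious one. The resolution is that $P=\conv(\cup_i \ell_i)$ is the convex hull of finitely many segments, hence a convex polytope with piecewise linear --- in particular piecewise $C^1$ --- boundary; it is therefore an admissible competitor, the unrestricted Wulff minimizer is attained within the restricted class, and the characterization descends to it. A secondary technical point, already handled above, is the boundary-measure matching between Theorem~\ref{dconv} and~\eqref{Wulff}.
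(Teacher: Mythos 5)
Your proposal is correct and takes essentially the same route as the paper: rewrite the CVIP objective via equation \eqref{supportequation} of Theorem \ref{dconv}, set $\Gamma=h_{\cup_i \ell_i}$, apply the Wulff Theorem, and identify the crystal $W_\Gamma$ with $P=\conv(\bigcup_i \ell_i)$, deferring part \ref{part2vip} to \cite{TVdiscreteIso}. Your extra verifications (reduced boundary versus piecewise $C^1$ boundary, nonnegativity of $\Gamma$ via central symmetry, and admissibility of the polytope $P$ within the restricted class) merely fill in details the paper leaves implicit.
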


\bigskip
{\bf Acknowledgments}.  
The author would like to thank Raman Sanyal, Rolf Schneider and Ellen Veomett for their helpful comments during the writing of this manuscript. Equation \eqref{dconvex} was suggested by conversation with Prof. Veomett. Reference \cite{MR0031762} on Wulff shapes was suggested by Prof. Schneider.

\bibliographystyle{alpha}
\bibliography{bibliography}

\end{document}